% EJC papers *must* begin with the following two lines. 
\documentclass[12pt]{article}
%\usepackage{e-jc}

% Please remove all other commands that change parameters such as
% margins or pagesizes.

% only use standard LaTeX packages
% only include packages that you actually need

% we recommend these ams packages
\usepackage{amsthm}
\usepackage{amsmath}
\usepackage{amssymb}
\usepackage{mathrsfs}

% we recommend the graphicx package for importing figures
\usepackage{graphicx}

% use this command to create hyperlinks (optional and recommended)
\usepackage[colorlinks=true,citecolor=black,linkcolor=black,urlcolor=blue]{hyperref}

% use these commands for typesetting doi and arXiv references in the bibliography

% all overfull boxes must be fixed; 
% i.e. there must be no text protruding into the margins

% declare theorem-like environments
\theoremstyle{plain}
\newtheorem{theorem}{Theorem}
\newtheorem{lemma}[theorem]{Lemma}
\newtheorem{corollary}[theorem]{Corollary}
\newtheorem{proposition}[theorem]{Proposition}

\theoremstyle{definition}

\theoremstyle{remark}

%%%%%%%%%%%%
%extra shortcuts

\newcommand{\beq}[1]{\begin{equation}\label{#1}}
\newcommand{\enq}[0]{\end{equation}}

\newcommand{\C}[2]{{{#1}\choose{{#2}}}}
\newcommand{\ga}[0]{\alpha }

\newcommand{\gO}[0]{\Omega}

\newcommand{\eps}[0]{\varepsilon }

\newcommand{\ra}[0]{\rightarrow}

\newcommand{\CC}[0]{{\bf C}}
\newcommand{\ZZ}[0]{{\bf Z}}

\newcommand{\GG}[0]{{\bf G}}

\newcommand{\zz}[0]{\mathbf{z}}

\newcommand{\D}[0]{{\cal D}}
\newcommand{\eee}[0]{{\cal E}}
\newcommand{\f}[0]{{\cal F}}
\newcommand{\g}[0]{{\cal G}}

\newcommand{\sub}[0]{\subseteq}
\newcommand{\sm}[0]{\setminus}
\renewcommand{\dots}[0]{,\ldots,}

\newcommand{\ov}[0]{\overline}

\newcommand{\uzero}[0]{\underline{0}}

\newcommand{\E}[0]{{\sf E}}
%%%%%%%%%%%%%%%%%%%%%%%%%%%%%%%%%%%%%%%%%%%%%%%%%%%%%%%

\title{\bf Modular statistics for subgraph counts \\in
sparse random graphs}

\author{Bobby DeMarco\thanks{Supported by NSF grant DMS1201337.}\\
\small\tt rvdemarco@gmail.com
\and
Jeff Kahn\thanks{Supported by NSF grant DMS1201337.} \\
\small Department of Mathematics\\[-0.8ex]
\small Rutgers University\\[-0.8ex]
\small Piscataway, NJ, USA\\
\small\tt jkahn@math.rutgers.edu
\and
Amanda Redlich\thanks{Supported by NSF Award No. 1004382.}\\
\small Department of Mathematics\\[-0.8ex]
\small Bowdoin College\\[-0.8ex]
\small Brunswick, ME, USA\\
\small\tt aredlich@bowdoin.edu
}

\begin{document}

\maketitle

\begin{abstract}
Answering a question of Kolaitis and Kopparty, we show
that, for given integer $q>1$ and
pairwise nonisomorphic connected graphs
$G_1\dots G_k$,
if $p=p(n) $ is such that
$\Pr(G_{n,p}\supseteq G_i)\rightarrow 1$ $\forall i$,
then, with $\xi_i$ the number of copies of $G_i$ in
$G_{n,p}$, $(\xi_1\dots \xi_k)$ is asymptotically uniformly
distributed on ${\bf Z}_q^k$.

\end{abstract}

\section{Introduction}
For graphs $G, H$ write $N(G,H)$ for the number of
unlabeled copies of $H$ in $G$ (e.g. $N(K_r,K_s) = \C{r}{s}$).
We use both $G_{n,p}$ and $G(n,p)$
for the ordinary (``binomial"
or ``Erd\H{o}s-R\'enyi") random graph.

We are interested here in extending to nonconstant $p$
the following beautiful result of
Kolaitis and Kopparty \cite{kk}.

\begin{theorem}\label{TKK}
Fix an integer $q>1$,
$p\in (0,1)$ and pairwise nonisomorphic connected
graphs $G_1\dots G_k$, each with at least two vertices,
and let $\xi_i $ be $ N(G_{n,p},G_i) \pmod{q}$.
Then the distribution of $\xi = (\xi_1\dots \xi_k)$ is
$e^{-\gO(n)}$-close to uniform on $\ZZ_q^k$.
In particular, for each $a\in \ZZ_q^k$,
$
\Pr(\xi =a)\ra q^{-k} $ as $n\rightarrow \infty$.
\end{theorem}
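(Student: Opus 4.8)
The plan is to control the law of $\xi$ through its Fourier transform on $\ZZ_q^k$, treating each nontrivial character separately. Writing $\omega=e^{2\pi i/q}$ and $\langle t,\xi\rangle=\sum_i t_i\xi_i$, Fourier inversion on $\ZZ_q^k$ gives
\[
\Pr(\xi=a)-q^{-k}=q^{-k}\sum_{t\neq 0}\omega^{-\langle t,a\rangle}\,\E\!\left[\omega^{\langle t,\xi\rangle}\right],
\]
so, since there are only $q^k=O(1)$ characters, it suffices to prove $|\E[\omega^{\langle t,\xi\rangle}]|=e^{-\gO(n)}$ for every nonzero $t=(t_1\dots t_k)$. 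This single estimate yields both the $e^{-\gO(n)}$ closeness to uniformity and the pointwise limit $\Pr(\xi=a)\to q^{-k}$.

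Fix such a $t$. The strategy is to locate $m=\gO(n)$ disjoint local ``switches'' whose independent resampling drives the expectation down geometrically. Ideally one selects a family $W=\{e_1\dots e_m\}$ of candidate edges so that \emph{no} copy of any $G_i$ contains two members of $W$; conditioning on the $\sigma$-field $\f_0$ generated by all edges outside $W$, the indicators $\1[e_j\in G_{n,p}]$ are then independent $\mathrm{Bernoulli}(p)$, and, writing $\mathbf c(e_j)=(c_1(e_j)\dots c_k(e_j))$ for the (now $\f_0$-measurable) numbers of copies of each $G_i$ through $e_j$, one gets the exact product
\[
\E\!\left[\omega^{\langle t,\xi\rangle}\mid \f_0\right]=\omega^{\langle t,\xi_0\rangle}\prod_{j=1}^{m}\Big((1-p)+p\,\omega^{\langle t,\mathbf c(e_j)\rangle}\Big),
\]
where $\xi_0$ counts copies avoiding $W$. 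A short computation gives $|(1-p)+p\,\omega^{s}|\le 1-\delta$ for a constant $\delta=\delta(p,q)>0$ whenever $s\not\equiv 0\pmod q$, while every factor has modulus $\le 1$. Hence $|\E[\omega^{\langle t,\xi\rangle}\mid\f_0]|\le(1-\delta)^{Z}$, where $Z$ counts the \emph{good} switches, those with $\langle t,\mathbf c(e_j)\rangle\not\equiv 0\pmod q$, and the whole problem reduces to showing $Z=\gO(n)$ with probability $1-e^{-\gO(n)}$.

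Two points remain, the second being the crux. First one must genuinely realize the product decomposition with $|W|=\gO(n)$; because $p$ is constant, $G_{n,p}$ spawns copies joining any prescribed pair of edges, so one cannot simply take a matching, and the switches must instead be built from private gadgets engineered so that every copy of every $G_i$ meets the switch-set in a single edge. Second, and this is where I expect the real difficulty, one must show a single switch is good with probability bounded away from $0$, i.e.\ that the integer $\langle t,\mathbf c(e_j)\rangle=\sum_i t_i c_i(e_j)$ avoids $0\pmod q$ with probability $\ge c(p,q,\{G_i\})>0$; the danger is \emph{systematic cancellation} among the $c_i$. To defeat it I would order the support graphs of $t$ by number of edges and work from the richest, $H=G_{i_0}$ (ties broken by vertices), exhibiting an elementary local modification---resampling one extra edge, or the edges at a single auxiliary gadget-vertex---that changes the number of $H$-copies through $e_j$ by a unit while creating no copy of any support graph with more edges than $H$. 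Here pairwise non-isomorphism is exactly what supplies a modification distinguishing $H$ from its competitors, and connectedness confines each copy to one ``molecule'' so the bookkeeping stays local; together these force $\langle t,\mathbf c(e_j)\rangle$ off the hyperplane $\{x:\langle t,x\rangle\equiv 0\pmod q\}$ with positive probability.

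Finally, to upgrade ``each switch good with probability $\ge c$'' to ``$Z=\gO(n)$ except with probability $e^{-\gO(n)}$'', I would place the gadgets on disjoint vertex sets, each carrying its own private randomness (the auxiliary edges above), so that conditioned on a typical ``base'' graph the goodness indicators are independent; a Chernoff bound then gives $Z\ge cm/2$ with failure probability $e^{-\gO(m)}=e^{-\gO(n)}$, while the base graph is atypical only with probability $e^{-\gO(n)}$ by standard concentration for the relevant extension counts. Combining the paragraphs yields $|\E[\omega^{\langle t,\xi\rangle}]|\le(1-\delta)^{cm/2}+e^{-\gO(n)}=e^{-\gO(n)}$, and summing over the $O(1)$ nonzero $t$ finishes the proof. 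I anticipate the single-switch non-cancellation step to be the principal obstacle, and the place where all three hypotheses---$q>1$, connectedness, and pairwise non-isomorphism---are genuinely needed.
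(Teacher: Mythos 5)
Your first reduction---Fourier inversion on $\ZZ_q^k$, equivalently the Vazirani XOR Lemma (Lemma \ref{xor})---is exactly the paper's first step, and your observation that distinct connected non-isomorphic $G_i$'s give distinct monomials is also on target. But the core of your argument has a genuine gap: the exact product decomposition you want cannot be realized. For the conditional expectation to factor over $W=\{e_1\dots e_m\}$ you need that, given the edges outside $W$, no surviving copy of any $G_i$ uses two or more edges of $W$. For constant $p$ this fails already for $|W|=2$ once some $G_i$ has at least three vertices: any two edges of $K_n$ (disjoint or sharing a vertex) extend to a copy of such a $G_i$, and with probability bounded away from $0$ (indeed w.h.p., in $\Theta(n^{v_{G_i}-O(1)})$ ways) the remaining edges of such a copy are all present, so the exponent retains cross terms $\zz_{e_j}\zz_{e_{j'}}(\cdots)$ and $\E[\omega^{\langle t,\xi\rangle}\mid\f_0]$ is not a product. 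You half-acknowledge this (``one cannot simply take a matching''), but the proposed fix---``private gadgets'' so that every copy meets the switch-set in a single edge---is not a construction and cannot exist inside $K_n$ at constant density: copies of a connected $G_i$ with enough vertices straddle any two prescribed gadgets, however they are engineered. Your subsequent worry about systematic cancellation in $\langle t,\mathbf c(e_j)\rangle$ is therefore moot, since the quantity it concerns never controls the expectation.

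The missing idea is that one should not seek exact factorization at all. The argument of Kolaitis--Kopparty (sketched in Section \ref{Proof} and encapsulated in Lemma \ref{poly}, a generalization of Babai--Nisan--Szegedy) writes $\sum_i t_i\xi_i$ as a polynomial $Q(\zz)$ in the edge indicators and requires only an \emph{embedded generalized inner product} in the top-degree part: $r=\gO(n)$ pairwise vertex-disjoint copies $E_1\dots E_r$ of a largest $G_{i_0}$ with $t_{i_0}\neq 0$ (so $|E_j|=d$ and $a_{E_j}=t_{i_0}\neq 0$), such that every other monomial of top degree meets $\cup_j E_j$ in fewer than $d$ edges---and this last condition is where connectivity enters, exactly as in the displayed verification in the paper. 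The iterated Cauchy--Schwarz (degree-reduction) argument behind Lemma \ref{poly} then yields $|\E[\omega^{Q(\zz)}]|\leq 2^{-\gO(r)}$ \emph{despite} the cross terms you were trying to eliminate. Without this tool, or an equivalent device for handling monomials that intersect several of your switches, your bound $|\E[\omega^{\langle t,\xi\rangle}\mid\f_0]|\le(1-\delta)^Z$ does not follow, and the proof does not close.
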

\noindent
(Recall two distributions are $\eps$-{\em close}
if their statistical (a.k.a. variation) distance is at most $\eps$.)  Essentially, this theorem states that for constants $p$ and $q$, subgraphs of $G(n,p)$ are uniformly distributed modulo $q$.

Theorem \ref{TKK} was motivated by an application to 0-1 laws for
first order logic with a parity quantifier or, more
generally, a quantifier that allows counting modulo $q$;
see Section \ref{Discussion} for a little more on this.

A natural question raised in \cite{kk}
(and communicated to the authors by S.K.)
asks, to what extent does Theorem \ref{TKK}
remain true if $p$ is allowed to tend to zero as $n $ grows,
e.g. if $p=n^{-\ga}$ for some fixed $\ga>0$?
Our purpose here is to answer this question.

We need a little notation.
For a graph $H=(V,E)$, set $v_H=|V|$, $e_H=|H|:=|E|$,
$\rho(H)= e_H/v_H$ and $m(H) = \max\{\rho(H'):H'\sub H, v_{H'}>0\}$.
Recall (see e.g. \cite{JLR}) that $n^{-1/m(H)}$ is a
threshold function for containment of $H$; that is,
the probability that $G_{n,p}$ ($p=p(n)$)
contains a copy of $H$
tends to 0 if $pn^{1/m(H)}\ra 0$ and
to 1 if $pn^{1/m(H)}\ra \infty$.
Given a collection $\g$ of graphs, set
$m(\g) =\max\{m(G):G\in\g\}$,
$p_\g(n) = n^{-1/m(\g)}$ and
$$
\Phi_\g(n,p) = \min_{G\in \g}\min\{n^{v_H}p^{e_H}:H\sub G ,v_H>0\}.
$$

\begin{theorem}\label{MT}
Let q, $G_1\dots G_k$ and $\xi=(\xi_1\dots \xi_k)$ be
as in Theorem \ref{TKK} and $\g=\{G_1\dots G_k\}$.
If $p=\omega(p_\g(n))$, then
the distribution of $\xi$ is
$\exp[-\gO(\Phi_\g(n,p))]$-close to uniform on $\ZZ_q^k$.
\end{theorem}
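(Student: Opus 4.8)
The plan is to move to the Fourier side of $\ZZ_q^k$ and reduce the statement to a family of exponential-sum estimates. Writing $\omega=e^{2\pi i/q}$ and $\langle t,\xi\rangle=\sum_{j}t_j\xi_j$, the statistical distance between the law of $\xi$ and the uniform law on $\ZZ_q^k$ is at most $\tfrac12\sum_{t\neq 0}|\E\,\omega^{\langle t,\xi\rangle}|$, the sum over nonzero $t\in\ZZ_q^k$. As $q$ and $k$ are fixed this is a bounded sum, so it suffices to prove, for each fixed $t\neq 0$, the single-character bound
\[
\big|\,\E\,\omega^{\langle t,\xi\rangle}\,\big|\ \le\ \exp[-\Omega(\Phi_\g(n,p))].
\]
Fixing such a $t$ and setting $X=\sum_j t_j N(G_{n,p},G_j)$, an integer-valued function of the edge indicators, the whole problem becomes showing that this one nontrivial character of $X$ is exponentially damped.

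The mechanism I would use to damp $\E\,\omega^X$ is local resampling against a fixed environment. Let $H^*$ be a subgraph of some $G_j$ realizing $\Phi_\g(n,p)=n^{v_{H^*}}p^{e_{H^*}}$. I would expose all of $G_{n,p}$ except a family of ``free'' pairs, one attached to each of many copies of $H^*$, chosen so that toggling a free pair changes $X$ by a fixed residue $d\in\ZZ_q$ depending only on the exposed environment. Averaging the free Bernoulli$(p)$ bit then multiplies the conditional character by $1-p(1-\omega^{d})$, whose modulus is strictly below $1$ exactly when $d\not\equiv 0\pmod q$. The point where the hypotheses are genuinely used is in forcing $d\not\equiv0$: ordering $G_1\dots G_k$ by $(v_{G_j},e_{G_j})$ and using connectivity together with pairwise nonisomorphism, I would build each gadget around the ``leading'' graph whose coefficient $t_j$ is nonzero, so that the toggle alters that graph's count but not, to leading order, the counts of the others, isolating a nonzero contribution. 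Connectivity also keeps each gadget local, so that gadgets placed at well-separated copies of $H^*$ do not interfere.

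To turn one damping factor into the exponent $\Phi_\g$, I would show that, with failure probability itself $\exp[-\Omega(\Phi_\g(n,p))]$, the graph admits a large non-interfering family of such switches whose accumulated damping is $\Omega(\Phi_\g(n,p))$; both the supply of switches and this concentration are controlled by the expected count $\Phi_\g$ of the rarest relevant subgraph (via a second-moment or Janson-type estimate). The main obstacle is precisely this accumulation. A single toggled pair typically lies in many copies and can move several of the $N(G_{n,p},G_j)$ at once, so the switches are genuinely correlated and one cannot simply multiply per-pair factors; the real work is to select a sub-family that acts independently enough --- peeling switches off one at a time along an exposure order, or via a Doob/martingale decomposition --- while keeping each residue provably nonzero modulo $q$, and to verify that the resulting exponent and the failure probability both match $\Phi_\g(n,p)$.
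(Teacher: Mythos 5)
Your Fourier reduction is exactly the paper's first step (Lemma \ref{xor}), so the comparison comes down to your switching mechanism versus the paper's use of Lemma \ref{poly}; and there your sketch has two genuine gaps.

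The first is the step ``forcing $d\not\equiv 0$.'' You argue that a gadget built around the leading graph alters its count ``but not, to leading order, the counts of the others''; but modulo $q$ there is no leading order --- every $G_j$ with $t_j\neq 0$ having copies through the toggled pair contributes in full. Concretely, take $\g=\{K_3,K_4\}$, $q=5$, $t_{K_3}=2$, $t_{K_4}=1$, and the natural gadget, a copy of $K_4$ minus an edge placed on the free pair: the toggle creates one $K_4$ and two triangles, so $X$ changes by $1\cdot 1+2\cdot 2\equiv 0 \pmod 5$, and the switch damps nothing. Some other gadget may work (e.g.\ one can try a telescoping argument, adding the edges of a minimal $G_j$ with $t_j\neq0$ one at a time), but no such argument appears in your sketch. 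Worse, the theorem's hypothesis $p=\omega(p_\g(n))$ includes $p$ bounded away from $0$, where no pair of vertices has a controllable local environment at all (for constant $p$, the probability that a pair has exactly one common neighbor is $e^{-\Theta(n)}$); there, nonvanishing of the residue becomes an equidistribution-mod-$q$ statement about large counts, which is essentially the content of the Babai--Nisan--Szegedy/Kolaitis--Kopparty machinery you are trying to bypass.

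The second gap is accumulation, which you correctly flag as ``the real work'' but do not supply. Since each free bit is Bernoulli$(p)$, a good switch damps the conditional character only by $|1-p(1-\omega^{d})|=1-\Omega(p)$, so you need $\Omega(\Phi_\g(n,p)/p)$ switches; and the product bound requires $X$ to be \emph{linear} in the free bits, i.e.\ no potential copy of any $G_j$ may contain two free pairs --- otherwise cross terms appear and the conditional character does not factor. Producing that many pairwise non-interfering switches, each with nonzero residue, with failure probability $\exp[-\Omega(\Phi_\g(n,p))]$, is not addressed: ``peeling along an exposure order'' is not an argument (after conditioning on earlier bits the residues of later switches change and can vanish), and a second-moment or Janson bound controls counts, not the size of a non-interfering subfamily --- the paper itself needs a median argument plus Talagrand's inequality for the analogous but easier step (Proposition \ref{Dprop}). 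The paper dissolves both difficulties with one device you lack: generate $G(n,p)$ as $\GG'=G(n,2p)$ followed by independent retention of each edge with probability $1/2$. Conditionally on $\GG'$, the resampling bits are Bernoulli$(1/2)$, so each of $\Omega(\Phi_\g(n,p))$ vertex-disjoint copies of the largest relevant $G_{i_0}$ serves as a block giving \emph{constant} damping in Lemma \ref{poly} (no $1/p$ loss), and that lemma's fourth hypothesis --- verified using only connectivity of the $G_i$ --- absorbs structurally all the interference (smaller graphs inside blocks, overlapping copies, cross terms) that your scheme must fight by hand.
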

\noindent
(Of course the constant in
the exponent depends on $q$ and $\g$.)

Suppose e.g. that $q=k=2$, $G_1=K_3$, and $G_2=K_4$.  
Then $m(\g)=m(G_2)=3/2$ ($m(G_1)=1$) and $p_\g(n)=n^{-2/3}$, so the 
theorem says that, asymptotically speaking, the parities of the 
numbers of copies of $K_3$ and $K_4$ are 
independent with each equally likely to be even or odd, provided $p=\omega(n^{-2/3})$.

For the special case $\g=\{K_3\}$, a somewhat weaker
version of Theorem~\ref{MT}---with $\exp[-\gO(\Phi_\g(n,p))]$
replaced by something polynomial in $n$ and $p$---has been shown
by Noga Alon \cite{SKPC}.

\medskip
We should also note here an immediate consequence of
Theorem \ref{MT}, which again answers a question from \cite{kk}.

\begin{corollary}\label{Cor}
Let $q$, $\g$ be as in Theorem \ref{TKK}, fix a positive
irrational $\ga$, and let $I=\{i\in [k]: m(G_i) < \ga^{-1}\}$
and $J=[k]\sm I$.  Then for $p=n^{-\ga}$ and $a\in \ZZ_q^k$
(and $\xi$ as in Theorem \ref{TKK}),
$$\Pr(\xi=a)\ra \left\{\begin{array}{ll}
q^{-|I|}&\mbox{if $a_j=0 ~\forall j\in J$,}\\
0&\mbox{otherwise.}
\end{array}\right.
$$
\end{corollary}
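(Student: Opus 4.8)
The plan is to derive Corollary~\ref{Cor} as a fairly direct consequence of Theorem~\ref{MT} together with the classical threshold result for subgraph containment. First I would split the index set according to the value of $m(G_i)$ relative to $\ga^{-1}$. For $j\in J$ we have $m(G_j)>\ga^{-1}$ (irrationality of $\ga$ rules out equality), which means $pn^{1/m(G_j)} = n^{1/m(G_j)-\ga}\ra 0$, so by the threshold statement recalled in the introduction, $G_{n,p}$ a.a.s.\ contains \emph{no} copy of $G_j$; hence $\xi_j=N(G_{n,p},G_j)=0 \pmod q$ a.a.s. This immediately forces $\Pr(\xi=a)\ra 0$ whenever $a_j\neq 0$ for some $j\in J$, which is the second case of the corollary.

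It remains to handle $a$ with $a_j=0$ for all $j\in J$, where I must show $\Pr(\xi=a)\ra q^{-|I|}$. The idea is to apply Theorem~\ref{MT} to the subfamily $\g_I=\{G_i:i\in I\}$ rather than to all of $\g$. For $i\in I$ we have $m(G_i)<\ga^{-1}$, so $m(\g_I)=\max_{i\in I}m(G_i)<\ga^{-1}$, and therefore $p=n^{-\ga}=\omega(n^{-1/m(\g_I)})=\omega(p_{\g_I}(n))$; this is exactly the hypothesis needed to invoke Theorem~\ref{MT} for $\g_I$. That theorem then gives that the restricted vector $\xi_I=(\xi_i)_{i\in I}$ is $\exp[-\gO(\Phi_{\g_I}(n,p))]$-close to uniform on $\ZZ_q^{|I|}$, and in particular $\Pr(\xi_I=a_I)\ra q^{-|I|}$ for the fixed target $a_I=(a_i)_{i\in I}$.

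Finally I would combine the two pieces. Conditioning on the a.a.s.\ event $\eee$ that $G_{n,p}$ contains no copy of any $G_j$, $j\in J$ (equivalently $\xi_j=0$ $\forall j\in J$), the event $\{\xi=a\}$ with $a_j=0$ on $J$ coincides with $\{\xi_I=a_I\}\cap\eee$. Since $\Pr(\eee)\ra 1$ and $\Pr(\xi_I=a_I)\ra q^{-|I|}$, we get $\Pr(\xi=a)=\Pr(\xi_I=a_I,\eee)\ra q^{-|I|}$, as required.

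The only genuine subtlety — the step I would be most careful about — is the \emph{independence} of the two statistics $\xi_I$ and $(\xi_j)_{j\in J}$ that the argument tacitly uses. Theorem~\ref{MT} as applied to $\g_I$ controls $\xi_I$ on its own, while the threshold argument controls $\xi_J$, but the corollary's conclusion factors as $q^{-|I|}\cdot\Pr(\xi_J=0)$ only if the contribution of $\xi_J$ does not disturb the near-uniformity of $\xi_I$. This is handled cleanly by the observation above: $\{\xi_J=0\}$ is an a.a.s.\ event, so intersecting it with $\{\xi_I=a_I\}$ changes the probability by at most $o(1)$. Thus no real joint analysis is needed; the main thing to verify precisely is that irrationality of $\ga$ legitimately partitions $[k]$ into $I$ and $J$ with strict inequalities, ensuring both limits ($0$ on one side, near-uniform on the other) actually hold.
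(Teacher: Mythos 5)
Your proposal is correct and is exactly the intended derivation: the paper states Corollary \ref{Cor} as an ``immediate consequence'' of Theorem \ref{MT} without writing out a proof, and the implicit argument is precisely your split into $J$ (where irrationality of $\ga$ gives $m(G_j)>\ga^{-1}$ strictly, so a.a.s.\ no copies and $\xi_j=0$) and $I$ (where Theorem \ref{MT} applied to $\g_I=\{G_i:i\in I\}$ gives $\xi_I$ asymptotically uniform, noting $\Phi_{\g_I}(n,p)\to\infty$), combined via the a.a.s.\ event. The only nitpick is that $\{\xi=a\}$ \emph{contains} rather than equals $\{\xi_I=a_I\}\cap\eee$ (the counts on $J$ could in principle vanish mod $q$ without being zero), but the sandwich $\Pr(\xi_I=a_I)-\Pr(\ov{\eee})\le\Pr(\xi=a)\le\Pr(\xi_I=a_I)$ gives the same limit.
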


\noindent
This is of interest partly for its possible relevance
to proving a modular convergence law (again see Section \ref{Discussion})
for $p=n^{-\ga}$ with $\ga$ irrational
({\em cf.} \cite[Theorem 6]{ss}, which says that
for such $p$ a 0-1 law holds for any first order property);
but we also have, again from \cite{kk}:
``Even the behavior of subgraph frequencies mod 2 in this
setting [i.e. with $p$ as in Corollary \ref{Cor}]
seems quite intriguing."

\medskip
The proof of Theorem \ref{MT}, given in
the next section, is similar to that of
Theorem \ref{TKK} in \cite{kk}.  In truth, we just
add one little idea to the machinery of \cite{kk};
nonetheless, as the proof answers a rather basic question,
and was apparently not quite trivial to find,
it seems worth recording.

\section{Proof}\label{Proof}

We will need the following two facts, the first of which,
from \cite{kk}, generalizes a result of
Babai, Nisan and Szegedy \cite{bns}.

\begin{lemma}\label{poly}
Let $q>1$ and $d>0$ be integers and $p \in (0,1)$.
Let $\mathcal{F}\subseteq 2^{[m]}$ and
let $Q(z_{1}, \ldots z_{m})\in\ZZ_{q}[z_{1},\ldots z_{m}]$
be a polynomial of the form
$$\sum_{S \in \mathcal{F}}a_{S}
\prod_{i \in S}z_{i}+Q'(z_{1},\ldots z_{m}),$$
where $\deg (Q')<d$.
Suppose there is some
$\mathcal{E}=\{E_{1},\ldots E_{r}\}\subseteq \mathcal{F}$
such that

\begin{itemize}
\item $|E_{j}|=d$ for all $j$,
\item $a_{E_{j}}\neq 0$ for all $j$,
\item $E_{j}\cap E_{j'}=\emptyset$ for all $j\neq j'$, and
\item for each $S \in \mathcal{F}\backslash \mathcal{E}$, $|S\cap(\cup_{j}E_{j})|<d$.
\end{itemize}
Let $\mathbf{z}=(\mathbf{z}_{1},\ldots \mathbf{z}_{m})
\in\ZZ^{m}_{q}$
be the random variable where, independently for each $i$,
$\Pr(\mathbf{z}_{i}=1)=p$ and $\Pr(\mathbf{z}_{i}=0)=1-p$.
Then
for $\omega \in \CC$ a primitive $q^{th}$-root of unity,
\beq{Eomega}
|\E[\omega^{Q(\mathbf{z})}]|\leq 2^{-\Omega(r)}.
\enq
\end{lemma}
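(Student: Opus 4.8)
The plan is to run the Babai--Nisan--Szegedy strategy: estimate the character sum $\E[\omega^{Q(\mathbf z)}]$ by iterated Cauchy--Schwarz (``multiplicative differentiation''), arranged so that the disjointness of the $E_j$ together with the hypothesis $|S\cap(\cup_j E_j)|<d$ for $S\in\mathcal F\setminus\mathcal E$ conspire to annihilate every low-order term and to decouple the $r$ blocks. As a first reduction I would condition on the coordinates outside the special variables. Write $W=\bigcup_{j}E_{j}$, condition on $(\mathbf z_i)_{i\notin W}$, and let $R$ be the resulting restriction of $Q$ to the variables $(\mathbf z_i)_{i\in W}$; since $\E[\omega^{Q}]=\E\big[\E[\omega^{R}\mid(\mathbf z_i)_{i\notin W}]\big]$, it suffices to bound $|\E[\omega^{R}]|$ uniformly over the conditioning. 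The two hypotheses give the key structural fact: each monomial $\prod_{i\in S}z_i$ of $Q$ restricts to $\prod_{i\in S\cap W}z_i$, so every monomial of $R$ is supported either on a full block $E_j$ (support size $d$) or on a set of size $<d$ (because $|S\cap W|<d$ for $S\in\mathcal F\setminus\mathcal E$ and $\deg Q'<d$); moreover no $S\ne E_j$ can restrict to all of $E_j$, so the coefficient of $\prod_{i\in E_j}z_i$ in $R$ is exactly $a_{E_j}\ne0$.

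Next, fix an arbitrary ordering $E_{j}=\{e_{j,1}\dots e_{j,d}\}$ of each block, giving every variable of $W$ a position $t\in[d]$. Introducing for each $i\in W$ two independent copies $\mathbf z_i^{(0)},\mathbf z_i^{(1)}$ and differentiating one position at a time, the standard iterated Cauchy--Schwarz inequality yields
$$|\E[\omega^{R}]|^{2^{d}}\le\E\big[\omega^{\Phi}\big],\qquad \Phi:=\sum_{\epsilon\in\{0,1\}^{d}}(-1)^{|\epsilon|}R(\mathbf z^{\epsilon}),$$
where $\mathbf z^{\epsilon}$ uses copy $\epsilon_{t}$ for the position-$t$ variables. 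For a single monomial $\prod_{i\in T}z_i$ of $R$ the alternating sum factorizes over positions,
$$\sum_{\epsilon}(-1)^{|\epsilon|}\prod_{i\in T}z_i^{(\epsilon_{t(i)})}=\prod_{s=1}^{d}\Big(\prod_{i\in T_{s}}z_i^{(0)}-\prod_{i\in T_{s}}z_i^{(1)}\Big),$$
with $T_{s}$ the part of $T$ in position $s$; if $T$ omits some position then $T_{s}=\emptyset$ forces a factor $1-1=0$. Since every non-full monomial has $|T|<d$ it must omit a position and hence contributes nothing, so only the full blocks survive: $\Phi=\sum_{j}a_{E_j}\prod_{t\in[d]}D_{j,t}$ with $D_{j,t}:=\mathbf z_{e_{j,t}}^{(0)}-\mathbf z_{e_{j,t}}^{(1)}\in\{-1,0,1\}$.

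Because the blocks are disjoint the $D_{j,t}$ are mutually independent, so $\E[\omega^{\Phi}]$ factorizes into $r$ per-block terms. Writing $P_{j}=\prod_{t}D_{j,t}$ and $\beta=(2p(1-p))^{d}=\Pr(P_{j}\ne0)$, the symmetry $\Pr(D_{j,t}=1)=\Pr(D_{j,t}=-1)$ gives $\Pr(P_{j}=\pm1)=\beta/2$, whence $\E[\omega^{a_{E_j}P_{j}}]=1-\beta\big(1-\cos(2\pi a_{E_j}/q)\big)$. As $a_{E_j}\not\equiv0\pmod q$ and $\beta\le2^{-d}$, this quantity lies in $[0,1]$ and is at most $1-\beta\delta$ with $\delta:=1-\cos(2\pi/q)>0$; multiplying over the $r$ blocks gives $|\E[\omega^{R}]|^{2^{d}}\le(1-\beta\delta)^{r}$, that is $|\E[\omega^{R}]|\le2^{-\gO(r)}$ with the constant depending on $p,q,d$, and averaging over the conditioning transfers this bound to $|\E[\omega^{Q}]|$.

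I expect the only real content to be the collapse carried out in the third paragraph: the assertion that full differentiation kills every low-order term and leaves a clean product over the disjoint blocks is exactly where both structural hypotheses (disjointness, which yields independence, and $|S\cap W|<d$, which guarantees an omitted position) are used, and once it is in hand the remaining estimate is a one-line Bernoulli computation. The step needing the most care is making the position-by-position Cauchy--Schwarz bookkeeping fully rigorous---tracking which variables are doubled at each stage and the resulting signs and conjugations---though this is routine once the final form of $\Phi$ is pinned down.
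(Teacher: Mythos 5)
Your proof is correct: the reduction by conditioning on the variables outside $\bigcup_j E_j$, the position-by-position iterated Cauchy--Schwarz, the observation that every monomial of the restricted polynomial other than the full blocks misses a position and is therefore annihilated, and the final per-block computation $\E[\omega^{a_{E_j}P_j}]\le 1-\beta\delta$ all check out. The paper itself states this lemma without proof, importing it from Kolaitis--Kopparty, where it is proved by exactly this Babai--Nisan--Szegedy differentiation argument, so your reconstruction follows the intended route.
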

\noindent
(We again observe
that the implied constant in the $\Omega(r)$ term depends on $q,p$ and $d$.)

\begin{lemma}[``Vazirani XOR Lemma"]\label{xor}
Let $q>1$ be an integer and $\omega \in \CC$
a primitive $q^{th}$-root of unity.
Let $\xi=(\xi_1, \ldots, \xi_l)$ be a random variable
taking values in
$\ZZ_{q}^{l}$.
Suppose that for every nonzero $c \in \ZZ^{l}_{q}$,
$$|\E[\omega^{\sum c_i \xi_i}]|\leq \epsilon.$$
Then the distribution of $\xi$ is $(q^{l}\epsilon)$-close to
uniform on $\ZZ^{l}_{q}$.
\end{lemma}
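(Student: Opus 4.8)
The plan is to prove this by Fourier analysis on the abelian group $\ZZ_q^l$, whose characters are exactly the maps $x\mapsto \omega^{c\cdot x}$ (with $c\cdot x=\sum_i c_ix_i$ computed in $\ZZ_q$) as $c$ ranges over $\ZZ_q^l$. Write $\mu$ for the distribution of $\xi$ and $U$ for the uniform distribution on $\ZZ_q^l$, and for a function $f$ on $\ZZ_q^l$ set $\hat f(c)=\sum_x f(x)\,\omega^{c\cdot x}$. The first step is to read off the relevant Fourier coefficients. By definition $\hat\mu(c)=\E[\omega^{c\cdot\xi}]=\E[\omega^{\sum_i c_i\xi_i}]$, so the hypothesis says precisely that $|\hat\mu(c)|\le\epsilon$ for every nonzero $c$, while $\hat\mu(0)=1$ because $\mu$ is a probability distribution. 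For the uniform distribution one has $\hat U(0)=1$ and $\hat U(c)=0$ for all $c\neq 0$.

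The second step is Fourier inversion, $f(x)=q^{-l}\sum_c \hat f(c)\,\omega^{-c\cdot x}$, which follows from the orthogonality relation stating that $\sum_c \omega^{c\cdot y}$ equals $q^l$ or $0$ according as $y=0$ or $y\neq 0$. Applying inversion to $\mu-U$ and noting that the two $c=0$ contributions cancel gives
$$\mu(x)-U(x)=q^{-l}\sum_{c\neq 0}\hat\mu(c)\,\omega^{-c\cdot x}.$$
Now the triangle inequality together with the hypothesis bounds each of the $q^l-1$ summands by $\epsilon$, so $|\mu(x)-U(x)|\le q^{-l}(q^l-1)\epsilon<\epsilon$ for every $x$. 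Summing over all $q^l$ points $x$ then controls the variation distance, $\tfrac12\sum_x|\mu(x)-U(x)|\le\tfrac12 q^l\epsilon\le q^l\epsilon$, which is exactly the claimed bound.

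There is no serious obstacle here: the lemma is a clean packaging of the standard fact that small nontrivial Fourier coefficients force near-uniformity. The only point needing a little care is the bookkeeping in the inversion step, namely checking that the $q^{-l}$ prefactor from inversion combines with the $q^l-1$ nonzero characters to leave a bound with no growing constant, and matching the factor $\tfrac12$ in the definition of statistical distance so that the stated $q^l\epsilon$ is comfortably an upper bound. I will note in passing that routing the estimate through Parseval and Cauchy--Schwarz instead of the crude $L^1$ triangle inequality would yield the sharper $\sqrt{q^l-1}\,\epsilon$, but since only $q^l\epsilon$ is asserted the direct argument above suffices.
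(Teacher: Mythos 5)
Your proof is correct. The paper does not prove this lemma at all --- it is quoted as a known fact (the ``Vazirani XOR Lemma'', taken from the Kolaitis--Kopparty paper) --- so there is nothing to compare against; your Fourier-analytic argument (inversion applied to $\mu-U$, then the triangle inequality over the $q^l-1$ nontrivial characters, giving $\tfrac12(q^l-1)\epsilon\le q^l\epsilon$ for the variation distance) is the standard proof, and every step, including the closing remark that Parseval and Cauchy--Schwarz would sharpen the bound to $O(\sqrt{q^l}\,\epsilon)$, checks out.
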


\begin{proof}[Proof of Theorem \ref{MT}]
Letting $e$ run over edges of $K_n$, the argument of
\cite{kk} expresses each $\sum c_i\xi_i$ in the natural way
as a polynomial in the indicators
$\mathbf{z}_e:={\bf 1}_{\{e\in G(n,p)\}}$ ($e\in E(K_n)$)---namely,
$$
\sum_i c_i\xi_i =\sum_i c_i\sum\{\prod_{e\in H}\mathbf{z}_e:
G_i\cong H\sub K_n\}
$$
---and for the $\eee$ of Lemma \ref{poly} uses $\gO(n)$
vertex-disjoint
copies of some largest $G_i$ among those with
$c_i\neq 0$.
The problem with this in the present situation is the
(hidden) dependence of the bound in \eqref{Eomega} on $p$.

We get around this difficulty by choosing our random
graph in two steps, so that when we come to apply
Lemma \ref{poly} we are back to constant $p$.
For simplicity we now write $\Phi$ for $\Phi_\g(n,p)$,
$\GG '$ for $G(n,2p)$ and
$\GG $ for the random subgraph of ${\bf G'}$ in which each edge is
present, independently of other choices, with probability 1/2;
in particular, our $\xi_i$'s are functions of
$\GG $ ($=G(n,p)$).

\medskip
Given $\GG'$,
we will apply Lemma \ref{poly} with variables
$\mathbf{z}_e ={\bf 1}_{\{e\in \GG \}}$ ($e\in \GG'$),
$\f$ the collection of copies of $G_1\dots G_k$
in $\GG'$, and $\mathcal{E}\sub \f$ a large collection of
vertex-disjoint copies of an appropriate $G_i$;
so first of all we need existence of such an $\eee$.
For a given $\eps$, let
$\D=\D_\eps $ be the event that $\GG'$ contains, for each $i$,
a collection of
$r:=\eps \Phi$ vertex-disjoint copies of $G_i$.
\begin{proposition}\label{Dprop}
There is a fixed $\eps>0$ (depending on $\g$) for which
\beq{PrD}
\Pr(\ov{\D}) < \exp[-\gO(\Phi)].
\enq
\end{proposition}

\noindent
{\em Proof.}

Though we don't know a reference,
this is presumably not new
and the ideas needed to prove it may all be found in
\cite{JLR};

so we just indicate what's involved.

Fix $i\in [k]$ and write $H$ for $G_i$.
Let $Y$ be the maximum size of a
collection of disjoint copies of $H$ in $\GG'$.
It is enough to show that
the (more properly, ``a") {\em median} of $Y$ is $\gO( \Phi)$;
\eqref{PrD} then follows {\em via} an inequality of
Talagrand (\cite{Talagrand} or \cite[Theorem 2.29]{JLR})
as in the argument for the
edge-disjoint analogue of Proposition~\ref{Dprop}
given on page 77 of \cite{JLR}.
(In our case Talagrand's inequality says that for a median $m$ of $Y$
and $t>0$, $\Pr(Y\leq m-t) \leq 2\exp[-t^2/(4\psi(m))]$,
where $\psi(r) =r|H|$.)

For a lower bound on the median of $Y$, write $X$ for
the number of copies of $H$ (in $\GG'$) and $Z$ for the number
of (unordered) pairs of non-disjoint copies.  Then:

\medskip\noindent
(i) $\E(X) =\gO(\Phi)$ (this is immediate from the definitions);

\medskip\noindent
(ii) w.h.p. $X>(1-o(1))\E X$ (a basic application of the 2nd
moment method; see \cite[Remark 3.7]{JLR});

\medskip\noindent
(iii)  $\E Z <c\E^2X/\Phi$ for a suitable
fixed $c$ (a straightforward calculation using the definition
of $\Phi$), so with probability at least 3/4,
$Z<4c\E^2X/\Phi$;

\medskip\noindent
(iv) by Tur\'an's Theorem (applied to the graph with vertices the 
copies of $H$, edges the non-disjoint pairs and (therefore)
independence number $Y$;
{\em cf.} \cite[Eq. (3.21)]{JLR}),
$Y\geq X^2/(X+2Z)$; and thus 

\medskip\noindent
(v) with probability
at least $3/4-o(1)$,
\[
Y > \frac{(1-o(1))\E^2X}{\E X + 8c\E^2X/\Phi}
=\gO(\Phi)
\]
(where the first inequality uses the fact that $x^2/(x+2z)$ is
increasing in $x$ for $x,z>0$).\qed

\medskip
In view of Proposition~\ref{Dprop} it is enough to show that
for any $G'$ satisfying $\D$,
the conditional distribution of $\xi$ given
$\{{\bf G'}=G'\}$ is
$\exp[-\Omega(\Phi)]$-close to uniform on $\ZZ_q^k$.
Given such a $G'$
and $\uzero\neq c\in \ZZ_q^k$,
take $\f_i$ to consist of all copies
of $G_i$ in $G'$ ($i\in [k]$) and $\f=\cup \{\f_i:c_i\neq 0\}$.
Fix, in addition, some
$i_0\in [k]$ with $c_{i_0}\neq 0$ and
$|G_{i_0}|=\max\{|G_i|:c_i\neq 0\}=:d$, and
some
$\eee=\{E_1\dots E_r\}\sub \f_{i_0}$, with the
$E_i$'s vertex-disjoint.

We have
$$
\sum_{i\in [k]} c_i\xi_i
= \sum_{i\in [k]}c_i\sum_{H\in \f_i}\prod_{e\in H}\zz_e
=: Q(\zz),
$$
where $\zz_e = {\bf 1}_{\{e\in \GG\}}$ for $e\in G'$.
We then need to say that $Q$, $\f$ and $\eee$
(with $q, d$ and $p=1/2$) satisfy the requirements of Lemma \ref{poly}.
But the first three of these are immediate
and the fourth follows from the connectivity of the $G_i$'s:
for $H\in \f\sm\eee$, if $V(H)\not\sub V(E_i) ~\forall i$, then
(since $H$ is connected and the $E_i$'s are
vertex-disjoint) $H\not\sub \cup E_i$,
whence $|H\cap (\cup E_i)|<|H|\leq d$;
otherwise we have $V(H)\sub V(E_j)$ for some $j$ and,
since $H\neq E_j$,
$|H\cap (\cup E_i)| = |H\cap E_j|< |E_i|=d$.
Thus Lemma \ref{poly} applies, yielding
\beq{Eomega'}
|\E ~\omega^{Q(z)}| \leq \exp[-\Omega(\Phi)],
\enq
and then (since this was for any $c\neq\uzero$) Lemma \ref{xor}
says that, as desired,
the conditional distribution of $\xi$ given
$\{{\bf G'}=G'\}$ is
$\exp[-\Omega(\Phi)]$-close to uniform on $\ZZ_q^k$.

\bigskip\noindent
$~$

\end{proof}

\section{Discussion}\label{Discussion}

As mentioned earlier, Theorem \ref{TKK} is a key
ingredient in the proof
of the
Kolaitis-Kopparty ``modular
convergence law" for
first order logic with a parity quantifier, or, more
generally, a quantifier that allows counting mod $q$.
This law says, briefly, that, for fixed $p$ and $n\ra\infty$,
the probability
of a given sentence in the system under consideration
tends to a limit that depends only on the congruence
class of $n$ mod $q$.
(See also \cite{strange} for an in-depth discussion
of 0-1 laws for random graphs.)

As suggested in \cite{kk}, it would be interesting to understand
to what extent such a law holds in the sparse setting.
Theorem \ref{MT} gets about half way to this goal
(for $p$ in its range);
but the other half---an assertion like
Theorem 2.3 of \cite{kk} to the effect that all relevant information
is contained in the subgraph frequencies---seems
to require something
new, since
the quantifier elimination process underlying that step
depends critically on properties of $G(n,p)$ that hold
for constant $p$ but fail when $p$ tends to zero.

\medskip
In closing we just mention that it would be
interesting to find a proof of
Theorem \ref{MT} that proceeds from first principles and does
not depend on the ``generalized inner product" polynomials
underlying Lemma \ref{poly}.

\subsection*{Acknowledgement}
We would like to thank Swastik Kopparty for telling us
the problem and for helpful conversations on the material
of Section \ref{Discussion}.


\begin{thebibliography}{10}

\bibitem{bns}
L. Babai, N. Nisan and M. Szegedy,
Multiparty protocols and logspace-hard pseudorandom sequences,
in {\em Proc. 21st ACM Symposium on the Theory of Computing}, 1989.


\iffalse
\bibitem{zo2}
R. Fagin, \textit{Probabilities on finite models}, Journal of Symbolic Logic, vol. 41 (1), 1976, pp 50-58.


\bibitem{zo1}
 Yu. V. Glebskii,  D. I. Kogan, M. I. Liogon'kii, and V. A. Talanov, \textit{Range and degree of realizability of formulas in the restricted predicate calculus}, Cybernetics and Systems Analysis, vol. 5, Springer New York, 1969, pp 142-154.
\fi

\bibitem{JLR}
S. Janson, T. \L uczak and A. Ruci\'nski,
\textit{Random Graphs}, Wiley-Interscience, 2000.

\bibitem{SKPC}
S. Kopparty, personal communication.

\bibitem{kk}
P. G. Kolaitis and S. Kopparty, \textit{Random graphs and the parity quantifier}, J. ACM, vol. 60 (5) Article 37, 2013.

\bibitem{ss}
S. Shelah and J. Spencer,
Zero-one laws for sparse random graphs,
{\em J. Amer. Math. Soc.} {\bf 1} (1988), 97-115.



\bibitem{strange}
J. Spencer, \textit{The Strange Logic of Random Graphs}, Springer, 2001


\bibitem{Talagrand} M. Talagrand,
Concentration of measure and isoperimetric inequalities in product
spaces, {\em Inst. Hautes \'Etudes Sci. Publ. Math.}
{\bf 81} (1995), 73-205.

\end{thebibliography}
\end{document}